\newtheorem{thm}{Theorem}[section]
\title{Canonical forms for pairs of commuting nilpotent $4\times 4$ matrices under simultaneous similarity}
\author{Jiuzhao Hua} 
\begin{document}
\date{\vspace{-0.5cm}}\date{} 
\maketitle
\begin{abstract}
We provide a list of canonical forms for all pairs of commuting nilpotent $4\times 4$ matrices over an algebraically closed field under simultaneous similarity.
\end{abstract}

\section{Introduction}
Let $n$ be a non-negative integer. A pair of $n\times n$ matrices $(A,B)$ over a field $\mathbb{F}$ is said to be \textit{similiar} to another pair $(C,D)$ if there exists a non-singular matrix $X$ over $\mathbb{F}$ such that $X^{-1}AX = C$ and $X^{-1}BX = D$.
A pair $(A,B)$ is said to be \textit{decomposable} if $(A,B)$ is similar to a pair which has the following form:
\[
\left(
\left[
\begin{array}{cc}
A_1 & 0  \\ 
0 & A_2 \\
\end{array}
\right],
\left[
\begin{array}{cc}
B_1 & 0  \\ 
0 & B_2 \\
\end{array}
\right]
\right),
\]
where $A_1$ and $B_1$ are square matrices of the same size.

Classifying all such pairs up to simultaneous similarity is a classical `wild' problem. It is believed that there is little hope in finding some algebraic objects that parametrize all similarity classes of
such pairs. Even when the pairs are restricted to commuting nilpotent pairs, i.e., $A^n=0$, $B^n=0$ and $AB=BA$, such a problem is still a `wild' problem. However, when $n$ is small, it is possible 
to find a list of canonical forms for all such pairs.

Let $\textrm{Mat}(n,\mathbb{F})$ be the matrix algebra which consists of all $n\times n$ matrices over $\mathbb{F}$, $\mathrm{GL}(n,\mathbb{F})$ be the General Linear Group which consists of all non-singular $n\times n$ matrices over $\mathbb{F}$. 
For a matrix $A\in\textrm{Mat}(n,\mathbb{F})$, let $\mathrm{Stab}(A)$ be the stabilizer group of $A$ in $\mathrm{GL}(n,\mathbb{F})$, i.e., 
\[
\mathrm{Stab}(A) = \{X\in\mathrm{GL}(n,\mathbb{F}) : X^{-1} A X = A\},
\]
and $\mathrm{NilC}(A)$ be the nilpotent commutator of $A$, i.e., 
\[
\mathrm{NilC}(A) = \{B\in\textrm{Mat}(n,\mathbb{F}) : B^n=0, AB=BA\}.
\]

For historical results on this topic, we refer the interested readers to Baranovsky's paper \cite{VB} and Basili \& Iarrobino's paper \cite{B-I}.
In this paper, we aim to classify all pairs of commuting nilpotent $4\times 4$ matrices under simultaneous similarity. The strategy we use is as follows: first we fix the matrix $A$, for example, we may assume that $A$ has a 
particular Jordan normal form, then we find $\mathrm{Stab}(A)$ and $\mathrm{NilC}(A)$ explicitly, and then we convert the problem of classifying all commuting nilpotent paris $(A,B)$ with $A$ fixed and $B$ arbitrary to the problem of classifying
the orbits of the following group action:
\begin{align*}
\mathrm{Stab}(A) \times \mathrm{NilC}(A) &\to \mathrm{NilC}(A) \\
(X, B)\,\,\,\,\,\,\,\,\,\,\,\,\,\,\,\, &\mapsto  X^{-1}BX.
\end{align*}

\section{The canonical forms}

\begin{thm}
Every indecomposable pair of commuting nilpotent $4\times 4$ matrices over an algebraically closed field $\mathbb{F}$ is similar to a unique pair in one of the following forms, where 
$\lambda, \mu, \nu \in \mathbb{F}$:
\[
(1.1) \,\,
\left(
\left[
\begin{array}{ccccc}
0 & 1 & 0 & 0 \\ 
0 & 0 & 1 & 0 \\ 
0 & 0 & 0 & 1 \\
0 & 0 & 0 & 0 \\
\end{array}
\right], \,\,
\left[
\begin{array}{ccccc}
0 & \lambda & \mu & \nu \\ 
0 & 0 & \lambda & \mu \\ 
0 & 0 & 0 & \lambda \\
0 & 0 & 0 & 0 \\
\end{array}
\right]
\right),
\]
\[(2.1) \,\,
\newcommand*{\temp}{\multicolumn{1}{|}{}}
\left(
\left[
\begin{array}{cccccc}
0 & 1 & 0 & \temp & 0 \\ 
0 & 0 & 1 & \temp & 0 \\ 
0 & 0 & 0 & \temp & 0 \\
\cline{1-5}
0 & 0 & 0 & \temp & 0 \\
\end{array}
\right], \,\,
\left[
\begin{array}{cccccc}
0 & \lambda & 0 & \temp & \mu \\ 
0 & 0 & \lambda & \temp & 0 \\ 
0 & 0 & 0 & \temp & 0 \\
\cline{1-5}
0 & 0 & 1 & \temp & 0 \\
\end{array}
\right]
\right),
\]
\[(2.2) \,\,
\newcommand*{\temp}{\multicolumn{1}{|}{}}
\left(
\left[
\begin{array}{cccccc}
0 & 1 & 0 & \temp & 0 \\ 
0 & 0 & 1 & \temp & 0 \\ 
0 & 0 & 0 & \temp & 0 \\
\cline{1-5}
0 & 0 & 0 & \temp & 0 \\
\end{array}
\right], \,\,
\left[
\begin{array}{cccccc}
0 & \lambda & 0 & \temp & 1 \\ 
0 & 0 & \lambda & \temp & 0 \\ 
0 & 0 & 0 & \temp & 0 \\
\cline{1-5}
0 & 0 & 0 & \temp & 0 \\
\end{array}
\right]
\right),
\]
\[(3.1) \,\,
\newcommand*{\temp}{\multicolumn{1}{|}{}}
\left(
\left[
\begin{array}{cccccc}
0 & 1 & \temp & 0 & 0 \\ 
0 & 0 & \temp & 0 & 0 \\ 
\cline{1-5}
0 & 0 & \temp & 0 & 1 \\
0 & 0 & \temp & 0 & 0 \\
\end{array}
\right], \,\,
\left[
\begin{array}{cccccc}
0 & 0 & \temp & 1 & 0 \\ 
0 & 0 & \temp & 0 & 1 \\ 
\cline{1-5}
0 & \lambda & \temp & 0 & \mu \\
0 & 0 & \temp & 0 & 0 \\
\end{array}
\right]
\right),
\]
\[(3.2) \,\,
\newcommand*{\temp}{\multicolumn{1}{|}{}}
\left(
\left[
\begin{array}{cccccc}
0 & 1 & \temp & 0 & 0 \\ 
0 & 0 & \temp & 0 & 0 \\ 
\cline{1-5}
0 & 0 & \temp & 0 & 1 \\
0 & 0 & \temp & 0 & 0 \\
\end{array}
\right], \,\,
\left[
\begin{array}{cccccc}
0 & \lambda & \temp & 0 & 1 \\ 
0 & 0 & \temp & 0 & 0 \\ 
\cline{1-5}
0 & 0 & \temp & 0 & \lambda \\
0 & 0 & \temp & 0 & 0 \\
\end{array}
\right]
\right),
\]
\[(4.1) \,\,
\newcommand*{\temp}{\multicolumn{1}{|}{}}
\left(
\left[
\begin{array}{cccccc}
0 & 1 & \temp & 0 & 0 \\ 
0 & 0 & \temp & 0 & 0 \\ 
\cline{1-5}
0 & 0 & \temp & 0 & 0 \\
0 & 0 & \temp & 0 & 0 \\
\end{array}
\right], \,\,
\left[
\begin{array}{cccccc}
0 & 0 & \temp & \lambda & 0 \\ 
0 & 0 & \temp & 0 & 0 \\ 
\cline{1-5}
0 & 0 & \temp & 0 & 1 \\
0 & 1 & \temp & 0 & 0 \\
\end{array}
\right]
\right),
\]
\[(4.2) \,\,
\newcommand*{\temp}{\multicolumn{1}{|}{}}
\left(
\left[
\begin{array}{cccccc}
0 & 1 & \temp & 0 & 0 \\ 
0 & 0 & \temp & 0 & 0 \\ 
\cline{1-5}
0 & 0 & \temp & 0 & 0 \\
0 & 0 & \temp & 0 & 0 \\
\end{array}
\right], \,\,
\left[
\begin{array}{cccccc}
0 & 0 & \temp & 1 & 0 \\ 
0 & 0 & \temp & 0 & 0 \\ 
\cline{1-5}
0 & 0 & \temp & 0 & 1 \\
0 & 0 & \temp & 0 & 0 \\
\end{array}
\right]
\right),
\]
\[(4.3) \,\,
\newcommand*{\temp}{\multicolumn{1}{|}{}}
\left(
\left[
\begin{array}{cccccc}
0 & 1 & \temp & 0 & 0 \\ 
0 & 0 & \temp & 0 & 0 \\ 
\cline{1-5}
0 & 0 & \temp & 0 & 0 \\
0 & 0 & \temp & 0 & 0 \\
\end{array}
\right], \,\,
\left[
\begin{array}{cccccc}
0 & 0 & \temp & 1 & 0 \\ 
0 & 0 & \temp & 0 & 0 \\ 
\cline{1-5}
0 & 0 & \temp & 0 & 0 \\
0 & 1 & \temp & 0 & 0 \\
\end{array}
\right]
\right),
\]
\[(5.1) \,\,
\newcommand*{\temp}{\multicolumn{1}{|}{}}
\left(
\left[
\begin{array}{ccccc}
0 & 0 & 0 & 0 \\ 
0 & 0 & 0 & 0 \\ 
0 & 0 & 0 & 0 \\
0 & 0 & 0 & 0 \\
\end{array}
\right], \,\,
\left[
\begin{array}{ccccc}
0 & 1 & 0 & 0 \\ 
0 & 0 & 1 & 0 \\ 
0 & 0 & 0 & 1 \\
0 & 0 & 0 & 0 \\
\end{array}
\right]
\right).
\]
\end{thm}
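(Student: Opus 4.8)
The plan is to follow the strategy of the introduction: conjugate so that $A$ sits in Jordan normal form, and then classify the orbits of $\mathrm{Stab}(A)$ acting on $\mathrm{NilC}(A)$ by conjugation. Since $A$ is a nilpotent $4\times4$ matrix, after a simultaneous conjugation of the pair we may take $A$ to be the nilpotent Jordan matrix attached to one of the five partitions $(4)$, $(3,1)$, $(2,2)$, $(2,1,1)$, $(1,1,1,1)$ of $4$. The Jordan type of $A$ is a simultaneous-similarity invariant, so these five cases are disjoint and can be handled one at a time; in each the residual freedom is precisely conjugation by $\mathrm{Stab}(A)$. Thus classifying indecomposable pairs reduces, case by case, to (i) computing $\mathrm{Stab}(A)$ and $\mathrm{NilC}(A)$ explicitly, (ii) listing the $\mathrm{Stab}(A)$-orbits on $\mathrm{NilC}(A)$, (iii) discarding the orbits whose pairs are decomposable, and (iv) proving that the chosen representatives are pairwise non-similar.

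First I would settle the two extreme partitions. For $(1,1,1,1)$ we have $A=0$, so $(0,B)$ is governed up to similarity by the Jordan type of $B$ alone, and the pair is indecomposable exactly when $B$ is a single $4\times4$ Jordan block; this produces $(5.1)$. For $(4)$, $A=J_4$ is a single block, $\mathrm{Stab}(A)$ consists of the invertible elements of the commutative algebra $\mathbb{F}[A]$, and $\mathrm{NilC}(A)=\{\lambda A+\mu A^2+\nu A^3\}$. Because $\mathbb{F}[A]$ is commutative, every $X\in\mathrm{Stab}(A)$ fixes every such $B$, so the orbits are singletons and $(\lambda,\mu,\nu)$ are complete invariants; indecomposability is automatic since $\mathbb{F}^4$ is already indecomposable as an $\mathbb{F}[A]$-module. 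This yields the three-parameter family $(1.1)$ and, incidentally, shows how uniqueness will look in its cleanest form.

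The content of the theorem is in the three middle partitions, where I would write $\mathrm{Stab}(A)$ and $\mathrm{NilC}(A)$ in block form. For $(3,1)$ the centralizer is six-dimensional and $\mathrm{NilC}(A)$ is a four-entry family coupling the $J_3$-block to the one-dimensional block; for $(2,2)$ the centralizer is $\mathrm{GL}(2,R)$ with $R=\mathbb{F}[t]/(t^2)$ and $t$ the class of $J_2$, and $\mathrm{NilC}(A)$ consists of the $B=B_0+B_1t$ whose reduction $B_0$ is nilpotent; for $(2,1,1)$ the centralizer is ten-dimensional. In each case I would run the same normalization: use conjugation to kill the redundant interior entries, rescale the surviving coupling entries to $1$, and branch according to which coupling blocks survive. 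The pairs whose coupling degenerates are exactly the decomposable ones and are dropped; the surviving normal forms are $(2.1)$--$(2.2)$, $(3.1)$--$(3.2)$, and $(4.1)$--$(4.3)$.

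I expect the main obstacle to be the $(2,2)$ case, where $\mathrm{GL}(2,R)$ acts on the nilpotent part of $\mathrm{Mat}(2,R)$ and yields two families: one with $B_0$ of rank one, normalized to $(3.1)$ with moduli $\lambda,\mu$, and one with $B_0=0$, where $B=tB_1$ and the conjugation reduces to $\mathrm{GL}(2,\mathbb{F})$ acting on $B_1$, the indecomposable pairs being those with $B_1$ a single Jordan block, giving $(3.2)$ with modulus $\lambda$. Carrying the normalization through while tracking the genuine moduli is delicate, and it is tied to the accompanying difficulty of \emph{uniqueness}. Rather than producing ad hoc invariants, I would prove uniqueness intrinsically: after a representative is put in normal form, I would determine the subgroup of $\mathrm{Stab}(A)$ that preserves its shape and show that this subgroup cannot alter the surviving parameters, i.e.\ that $X^{-1}BX=B'$ with $X\in\mathrm{Stab}(A)$ and $B,B'$ both normalized forces the parameters of $B$ and $B'$ to agree. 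A concluding tally then confirms that every indecomposable pair occurs in the list exactly once.
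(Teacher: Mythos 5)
Your proposal follows essentially the same route as the paper: reduce $A$ to one of the five nilpotent Jordan types, compute $\mathrm{Stab}(A)$ and $\mathrm{NilC}(A)$ in block form, normalize $B$ under the conjugation action (handling the partitions $(4)$ and $(1,1,1,1)$ by the same commutativity and Jordan-form observations the paper uses), and discard the degenerate couplings as decomposable. Your reformulation of the $(2,2)$ case as $\mathrm{GL}(2,\mathbb{F}[t]/(t^2))$ acting on $B_0+B_1t$ with $B_0$ nilpotent, and your stabilizer-based uniqueness check, are just cleaner phrasings of exactly the computations the paper carries out.
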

\begin{proof}
Assuming that $(A,B)$ is an indecomposable pair of commuting nilpotent  $4\times 4$ matrices over $\mathbb{F}$, 
the Jordan normal form theorem implies that $A$ is similar to one of the following matrices:
\[
\left[
\begin{array}{ccccc}
0 & 1 & 0 & 0 \\ 
0 & 0 & 1 & 0 \\ 
0 & 0 & 0 & 1 \\
0 & 0 & 0 & 0 \\
\end{array}
\right], 
\left[
\begin{array}{ccccc}
0 & 1 & 0 & 0 \\ 
0 & 0 & 1 & 0 \\ 
0 & 0 & 0 & 0 \\
0 & 0 & 0 & 0 \\
\end{array}
\right], 
\left[
\begin{array}{ccccc}
0 & 1 & 0 & 0 \\ 
0 & 0 & 0 & 0 \\ 
0 & 0 & 0 & 1 \\
0 & 0 & 0 & 0 \\
\end{array}
\right],
\]
\[
\left[
\begin{array}{ccccc}
0 & 1 & 0 & 0 \\ 
0 & 0 & 0 & 0 \\ 
0 & 0 & 0 & 0 \\
0 & 0 & 0 & 0 \\
\end{array}
\right],
\left[
\begin{array}{ccccc}
0 & 0 & 0 & 0 \\ 
0 & 0 & 0 & 0 \\ 
0 & 0 & 0 & 0 \\
0 & 0 & 0 & 0 \\
\end{array}
\right].
\]

\textbf{Case 1.} Let 
\[
A = \left[
\begin{array}{ccccc}
0 & 1 & 0 & 0 \\ 
0 & 0 & 1 & 0 \\ 
0 & 0 & 0 & 1 \\
0 & 0 & 0 & 0 \\
\end{array}
\right].
\]
Then we have,
\[
\mathrm{Stab}(A) = \left\{
\left[
\begin{array}{ccccc}
x & y & z & w \\ 
0 & x & y & z \\ 
0 & 0 & x & y \\
0 & 0 & 0 & x \\
\end{array}
\right] :
x, y, z, w \in \mathbb{F} \textrm{ and } x\ne 0
\right\},
\]
\[
\mathrm{NilC}(A) = \left\{
\left[
\begin{array}{ccccc}
0 & \lambda & \mu & \nu \\ 
0 & 0 & \lambda & \mu \\ 
0 & 0 & 0 & \lambda \\
0 & 0 & 0 & 0 \\
\end{array}
\right] :
\lambda, \mu, \nu \in \mathbb{F}
\right\}.
\]
Since for any $B\in\mathrm{NilC}(A)$ and any $X\in\mathrm{Stab}(A)$, we have $X^{-1}BX = B$, and hence there is exactly one element in each orbit of $\mathrm{NilC}(A)$. 
$(A,B)$ is indecomposable because $A$ is indecomposable. This verifies (1.1).

\textbf{Case 2.} Let 
\[
\newcommand*{\temp}{\multicolumn{1}{|}{}}
A = \left[
\begin{array}{cccccc}
0 & 1 & 0 & \temp & 0 \\ 
0 & 0 & 1 & \temp & 0 \\ 
0 & 0 & 0 & \temp & 0 \\
\cline{1-5}
0 & 0 & 0 & \temp & 0 \\
\end{array}
\right].
\]
Then, by Theorem 2.1 of \cite{D-H}, we have
\[
\newcommand*{\temp}{\multicolumn{1}{|}{}}
\mathrm{Stab}(A) = \left\{
\left[
\begin{array}{cccccc}
x & y & z & \temp & s \\ 
0 & x & y & \temp & 0 \\ 
0 & 0 & x & \temp & 0 \\
\cline{1-5}
0 & 0 & t & \temp & w \\
\end{array}
\right] :
x,y,z,w,s,t \in \mathbb{F} \textrm{ and } x\ne 0, w\ne 0
\right\},
\]
\[
\newcommand*{\temp}{\multicolumn{1}{|}{}}
\mathrm{NilC}(A) = \left\{
\left[
\begin{array}{cccccc}
0 & \alpha & \beta & \temp & \sigma \\ 
0 & 0 & \alpha & \temp & 0 \\ 
0 & 0 & 0 & \temp & 0 \\
\cline{1-5}
0 & 0 & \tau & \temp & 0 \\
\end{array}
\right] :
\alpha, \beta, \sigma, \tau \in \mathbb{F}
\right\}.
\]

Let 
\[
\newcommand*{\temp}{\multicolumn{1}{|}{}}
B = \left[
\begin{array}{cccccc}
0 & \alpha & \beta & \temp & \sigma \\ 
0 & 0 & \alpha & \temp & 0 \\ 
0 & 0 & 0 & \temp & 0 \\
\cline{1-5}
0 & 0 & \tau & \temp & 0 \\
\end{array}
\right] \in \mathrm{NilC}(A) 
\text{ and }
X = \left[
\begin{array}{cccccc}
x & y & z & \temp & s \\ 
0 & x & y & \temp & 0 \\ 
0 & 0 & x & \temp & 0 \\
\cline{1-5}
0 & 0 & t & \temp & w \\
\end{array}
\right] \in \mathrm{Stab}(A).
\]
Then, we have
\[
\newcommand*{\temp}{\multicolumn{1}{|}{}}
X^{-1} = \left[
\begin{array}{cccccc}
x^{-1} & -x^{-2}y & * & \temp & -(xw)^{-1}s \\ 
0 & x^{-1} & -x^{-2}y & \temp & 0 \\ 
0 & 0 & x^{-1} & \temp & 0 \\
\cline{1-5}
0 & 0 & -(xw)^{-1}t & \temp & w^{-1} \\
\end{array}
\right] ,
\]
where the $*$ represents a value that we do not bother to know.
Thus, we have
\[
\newcommand*{\temp}{\multicolumn{1}{|}{}}
X^{-1}BX = 
\left[
\begin{array}{cccccc}
0 & \alpha & \beta'  & \temp & x^{-1}w\sigma \\ 
0 & 0 & \alpha & \temp & 0 \\ 
0 & 0 & 0 & \temp & 0 \\
\cline{1-5}
0 & 0 & xw^{-1}\tau & \temp & 0 \\
\end{array}
\right],
\]
where $\beta' = \beta -w^{-1}s\tau + x^{-1}t\sigma$.

$\sigma$ and $\tau$ cannot be $0$ at the same time, otherwise $(A,B)$ is decomposable. If $\tau\ne 0$, then we can reduce $xw^{-1}\tau$
to $1$. At the same time, $x^{-1}w\sigma$ is reduced to $\sigma\tau$. If $\tau=0$ then $\sigma$ cannot be $0$, thus $x^{-1}w\sigma$ can be reduced to $1$.
Since $\sigma$ and $\tau$ cannot be $0$ at the same time, $\beta'$ can be reduced to $0$ because $s$ and $t$ are free. Thus $B$ can be reduced to
\[
\newcommand*{\temp}{\multicolumn{1}{|}{}}
\left[
\begin{array}{cccccc}
0 & \alpha & 0  & \temp & \sigma\tau \\ 
0 & 0 & \alpha & \temp & 0 \\ 
0 & 0 & 0 & \temp & 0 \\
\cline{1-5}
0 & 0 & 1 & \temp & 0 \\
\end{array}
\right] 
\text{ or }
\left[
\begin{array}{cccccc}
0 & \alpha & 0  & \temp & 1 \\ 
0 & 0 & \alpha & \temp & 0 \\ 
0 & 0 & 0 & \temp & 0 \\
\cline{1-5}
0 & 0 & 0 & \temp & 0 \\
\end{array}
\right] .
\]
In any case, $(A,B)$ is indecomposable. This fact is left as an exercise to the interested readers. This verifies (2.1) and (2.2).

\textbf{Case 3.} Let 
\[
\newcommand*{\temp}{\multicolumn{1}{|}{}}
A = \left[
\begin{array}{cccccc}
0 & 1 & \temp & 0 & 0 \\ 
0 & 0 & \temp & 0 & 0 \\ 
\cline{1-5}
0 & 0 & \temp & 0 & 1 \\
0 & 0 & \temp & 0 & 0 \\
\end{array}
\right].
\]

We transform $A$ into another shape so that it is easier to work with. Let
\[
\newcommand*{\temp}{\multicolumn{1}{|}{}}
T = \left[
\begin{array}{ccccc}
1 & 0 & 0 & 0 \\ 
0 & 0 & 1 & 0 \\ 
0 & 1 & 0 & 0 \\
0 & 0 & 0 & 1 \\
\end{array}
\right],
\]
and $A' = T^{-1}AT$, then we have
\[
\newcommand*{\temp}{\multicolumn{1}{|}{}}
A' = \left[
\begin{array}{cccccc}
0 & 0 & \temp & 1 & 0 \\ 
0 & 0 & \temp & 0 & 1 \\ 
\cline{1-5}
0 & 0 & \temp & 0 & 0 \\
0 & 0 & \temp & 0 & 0 \\
\end{array}
\right].
\]
And so we have
\[
\newcommand*{\temp}{\multicolumn{1}{|}{}}
\mathrm{Stab}(A') = \left\{
\left[
\begin{array}{cccccc}
x & s & \temp & y & t \\ 
u & z & \temp & v & w \\ 
\cline{1-5}
0 & 0 & \temp & x & s \\
0 & 0 & \temp & u & z \\
\end{array}
\right] :
\left[
\begin{array}{cc}
x & s \\ 
u & z \\ 
\end{array}
\right] 
\!\in\mathrm{GL}(2,\mathbb{F}),
\left[
\begin{array}{cc}
y & t \\ 
v & w \\ 
\end{array}
\right] 
\!\in\mathrm{Mat}(2,\mathbb{F})
\right\},
\]
\[
\newcommand*{\temp}{\multicolumn{1}{|}{}}
\mathrm{NilC}(A') = \left\{
\left[
\begin{array}{cccccc}
\alpha & \lambda & \temp & \beta & \mu \\ 
\sigma & \gamma & \temp & \tau & \delta \\ 
\cline{1-5}
0 & 0 & \temp & \alpha & \lambda \\
0 & 0 & \temp & \sigma & \gamma \\
\end{array}
\right] :
\left[
\begin{array}{cc}
\alpha & \lambda \\ 
\sigma & \gamma \\ 
\end{array}
\right]^2 = 0,
\left[
\begin{array}{cc}
\beta & \mu \\ 
\tau & \delta \\ 
\end{array}
\right] 
\!\in\mathrm{Mat}(2,\mathbb{F})
\right\}.
\]

Note that, $B\in\mathrm{NilC}(A')$ if and only if $TBT^{-1}\in\mathrm{NilC}(A)$. Let 
\[
\newcommand*{\temp}{\multicolumn{1}{|}{}}
B= \left[
\begin{array}{cccccc}
\alpha & \lambda & \temp & \beta & \mu \\ 
\sigma & \gamma & \temp & \tau & \delta \\ 
\cline{1-5}
0 & 0 & \temp & \alpha & \lambda \\
0 & 0 & \temp & \sigma & \gamma \\
\end{array}
\right] \in \mathrm{Stab}(A')
\textrm{ and }
X = 
\left[
\begin{array}{cccccc}
x & s & \temp & y & t \\ 
u & z & \temp & v & w \\ 
\cline{1-5}
0 & 0 & \temp & x & s \\
0 & 0 & \temp & u & z \\
\end{array}
\right]  \in \mathrm{NilC}(A').
\]
Then we have
\[
\newcommand*{\temp}{\multicolumn{1}{|}{}}
X^{-1}BX = 
\left[
\begin{array}{cccccc}
\alpha' & \lambda' & \temp & * & * \\ 
\sigma' & \gamma' & \temp & * & * \\ 
\cline{1-5}
0 & 0 & \temp & \alpha' & \lambda' \\
0 & 0 & \temp & \sigma' & \gamma' \\
\end{array}
\right],
\]
where 
\[
\newcommand*{\temp}{\multicolumn{1}{|}{}}
\left[
\begin{array}{cc}
\alpha' & \lambda' \\ 
\sigma' & \gamma' \\ 
\end{array}
\right] = 
\left[
\begin{array}{cc}
x & s \\ 
u & z \\ 
\end{array}
\right]^{-1}
\left[
\begin{array}{cc}
\alpha & \lambda \\ 
\sigma & \gamma \\ 
\end{array}
\right]
\left[
\begin{array}{cc}
x & s \\ 
u & z \\ 
\end{array}
\right].
\]
Thus, by the Jordan normal form theorem, $B$ can be reduced to the following matrices:
\[
\newcommand*{\temp}{\multicolumn{1}{|}{}}
\left[
\begin{array}{cccccc}
0 & 1 & \temp & * & * \\ 
0 & 0 & \temp & * & * \\ 
\cline{1-5}
0 & 0 & \temp & 0 & 1 \\
0 & 0 & \temp & 0 & 0 \\
\end{array}
\right]
\text{ or }
\left[
\begin{array}{cccccc}
0 & 0 & \temp & * & * \\ 
0 & 0 & \temp & * & * \\ 
\cline{1-5}
0 & 0 & \temp & 0 & 0 \\
0 & 0 & \temp & 0 & 0 \\
\end{array}
\right].
\]
Let 
\[
\newcommand*{\temp}{\multicolumn{1}{|}{}}
B= \left[
\begin{array}{cccccc}
0 & 1 & \temp & \beta & \mu \\ 
0 & 0 & \temp & \tau & \delta \\ 
\cline{1-5}
0 & 0 & \temp & 0 & 1 \\
0 & 0 & \temp & 0 & 0 \\
\end{array}
\right]
\text{ and }
X = 
\left[
\begin{array}{cccccc}
x & s & \temp & y & t \\ 
0 & x & \temp & v & w \\ 
\cline{1-5}
0 & 0 & \temp & x & s \\
0 & 0 & \temp & 0 & x \\
\end{array}
\right] .
\]
Note that the choices of $X$ are restricted because we do not want to disturb the shape of $B$. We can assume that $x=1$, otherwise we can replace $X$ by $x^{-1}X$, thus we have
\[
\newcommand*{\temp}{\multicolumn{1}{|}{}}
X^{-1} = 
\left[
\begin{array}{cccccc}
1 & -s & \temp & -y+vs & * \\ 
0 & 1 & \temp & -v & -w+vs \\ 
\cline{1-5}
0 & 0 & \temp & 1 & -s \\
0 & 0 & \temp & 0 & 1 \\
\end{array}
\right].
\]
And so,
\[
\newcommand*{\temp}{\multicolumn{1}{|}{}}
X^{-1}BX = 
\left[
\begin{array}{cccccc}
0 & 1 & \temp & v+\beta -s\tau & \mu' \\ 
0 & 0 & \temp & \tau & s\tau + \delta -v \\ 
\cline{1-5}
0 & 0 & \temp & 0 & 1 \\
0 & 0 & \temp & 0 & 0 \\
\end{array}
\right],
\]
where $\mu'=w+s(\beta-s\tau) + \mu -s\delta - y +vs$. $\mu'$ can be reduced to 0 because $y$ is free.
Thus, 
$B$ can be reduced to the following matrix:
\[
\newcommand*{\temp}{\multicolumn{1}{|}{}}
\left[
\begin{array}{cccccc}
0 & 1 & \temp & 0 & 0 \\ 
0 & 0 & \temp & \tau & \beta + \delta \\ 
\cline{1-5}
0 & 0 & \temp & 0 & 1 \\
0 & 0 & \temp & 0 & 0 \\
\end{array}
\right].
\]

Since $(A, TBT^{-1}) = T(A', B)T^{-1}$, $(A, TBT^{-1})$ is similar to the following pair:
\[
\newcommand*{\temp}{\multicolumn{1}{|}{}}
\left(
\left[
\begin{array}{cccccc}
0 & 1 & \temp & 0 & 0 \\ 
0 & 0 & \temp & 0 & 0 \\ 
\cline{1-5}
0 & 0 & \temp & 0 & 1 \\
0 & 0 & \temp & 0 & 0 \\
\end{array}
\right],
\left[
\begin{array}{cccccc}
0 & 0 & \temp & 1 & 0 \\ 
0 & 0 & \temp & 0 & 1 \\ 
\cline{1-5}
0 & \tau & \temp & 0 & \beta + \delta \\
0 & 0 & \temp & 0 & 0 \\
\end{array}
\right]
\right).
\]
The fact that the above pair is always indecomposable is left as an exercise to the interested readers.
This verifies (3.1).

Now let 
\[
\newcommand*{\temp}{\multicolumn{1}{|}{}}
B= \left[
\begin{array}{cccccc}
0 & 0 & \temp & \beta & \mu \\ 
0 & 0 & \temp & \tau & \delta \\ 
\cline{1-5}
0 & 0 & \temp & 0 & 0 \\
0 & 0 & \temp & 0 & 0 \\
\end{array}
\right]
\text{ and }
X = 
\left[
\begin{array}{cccccc}
x & s & \temp & y & t \\ 
u & z & \temp & v & w \\ 
\cline{1-5}
0 & 0 & \temp & x & s \\
0 & 0 & \temp & u & z \\
\end{array}
\right].
\]

Then we have
\[
\newcommand*{\temp}{\multicolumn{1}{|}{}}
X^{-1}BX = 
\left[
\begin{array}{cccccc}
0 & 0 & \temp & \beta' & \mu' \\ 
0 & 0 & \temp & \tau' & \delta' \\ 
\cline{1-5}
0 & 0 & \temp & 0 & 0 \\
0 & 0 & \temp & 0 & 0 \\
\end{array}
\right], 
\]
where 
\[
\left[
\begin{array}{cc}
\beta' & \mu' \\ 
\tau' & \delta' \\ 
\end{array}
\right] 
= \left[
\begin{array}{cc}
x & s \\ 
u & z \\ 
\end{array}
\right] ^{-1}
\left[
\begin{array}{cc}
\beta & \mu \\ 
\tau & \delta \\ 
\end{array}
\right] 
 \left[
\begin{array}{cc}
x & s \\ 
u & z \\ 
\end{array}
\right].
\]

By the Jordan normal form theorem, $B$ can be reduced to
\[
\newcommand*{\temp}{\multicolumn{1}{|}{}}
\left[
\begin{array}{cccccc}
0 & 0 & \temp & \lambda & 0 \\ 
0 & 0 & \temp & 0 & \alpha \\ 
\cline{1-5}
0 & 0 & \temp & 0 & 0 \\
0 & 0 & \temp & 0 & 0 \\
\end{array}
\right]
\text{ or }
\left[
\begin{array}{cccccc}
0 & 0 & \temp & \lambda & 1 \\ 
0 & 0 & \temp & 0 & \lambda \\ 
\cline{1-5}
0 & 0 & \temp & 0 & 0 \\
0 & 0 & \temp & 0 & 0 \\
\end{array}
\right], 
\]
$B$ cannot take the first form above, otherwise $(A, TBT^{-1})$ is decomposable. Thus $B$ must take the second form above, and so $(A, TBT^{-1})$ has the following form:
\[
\newcommand*{\temp}{\multicolumn{1}{|}{}}
\left(
\left[
\begin{array}{cccccc}
0 & 1 & \temp & 0 & 0 \\ 
0 & 0 & \temp & 0 & 0 \\ 
\cline{1-5}
0 & 0 & \temp & 0 & 1 \\
0 & 0 & \temp & 0 & 0 \\
\end{array}
\right], \,\,
\left[
\begin{array}{cccccc}
0 & \lambda & \temp & 0 & 1 \\ 
0 & 0 & \temp & 0 & 0 \\ 
\cline{1-5}
0 & 0 & \temp & 0 & \lambda \\
0 & 0 & \temp & 0 & 0 \\
\end{array}
\right]
\right).
\]
The fact that the above pair is always indecomposable is left as an exercise to the interested readers.
This verifies (3.2).

\textbf{Case 4.} Let 
\[
A =
\newcommand*{\temp}{\multicolumn{1}{|}{}}
\left[
\begin{array}{cccccc}
0 & 1 & \temp & 0 & 0 \\ 
0 & 0 & \temp & 0 & 0 \\ 
\cline{1-5}
0 & 0 & \temp & 0 & 0 \\
0 & 0 & \temp & 0 & 0 \\
\end{array}
\right].
\]
Then, by Theorem 2.1 of \cite{D-H}, we have
\[
\newcommand*{\temp}{\multicolumn{1}{|}{}}
\mathrm{Stab}(A) = 
\left\{
\left[
\begin{array}{cccccc}
x & y & \temp & s & t \\ 
0 & x & \temp & 0 & 0 \\ 
\cline{1-5}
0 & p & \temp & z & u \\
0 & q & \temp & v & w \\
\end{array}
\right] : 
x \ne 0,
\left[
\begin{array}{cc}
z & u \\ 
v & w \\ 
\end{array}
\right] \in\mathrm{GL}(2,\mathbb{F})
\right\},
\]
\[
\newcommand*{\temp}{\multicolumn{1}{|}{}}
\mathrm{NilC}(A) = \left\{
\left[
\begin{array}{cccccc}
0 & \alpha & \temp & \sigma & \tau \\ 
0 & 0 & \temp & 0 & 0 \\ 
\cline{1-5}
0 & \lambda & \temp & \beta & \gamma \\
0 & \mu & \temp & \delta & \eta \\
\end{array}
\right] :
\left[
\begin{array}{cc}
\beta & \gamma \\ 
\delta & \eta \\ 
\end{array}
\right]^2 = 0
\right\}.
\]
Let 
\[
\newcommand*{\temp}{\multicolumn{1}{|}{}}
B = 
\left[
\begin{array}{cccccc}
0 & \alpha & \temp & \sigma & \tau \\ 
0 & 0 & \temp & 0 & 0 \\ 
\cline{1-5}
0 & \lambda & \temp & \beta & \gamma \\
0 & \mu & \temp & \delta & \eta \\
\end{array}
\right] \in \mathrm{NilC}(A) 
\text{ and }
X = \left[
\begin{array}{cccccc}
x & y & \temp & s & t \\ 
0 & x & \temp & 0 & 0 \\ 
\cline{1-5}
0 & p & \temp & z & u \\
0 & q & \temp & v & w \\
\end{array}
\right] \in\mathrm{Stab}(A).
\]
Then we have
\[
\newcommand*{\temp}{\multicolumn{1}{|}{}}
X^{-1}BX = 
\left[
\begin{array}{cccccc}
0 & * & \temp & * & * \\ 
0 & 0 & \temp & 0 & 0 \\ 
\cline{1-5}
0 & * & \temp & \beta' & \gamma' \\
0 & * & \temp & \delta' & \eta' \\
\end{array}
\right],
\]
where 
\[
\newcommand*{\temp}{\multicolumn{1}{|}{}}
\left[
\begin{array}{cc}
\beta' & \gamma' \\ 
\delta' & \eta' \\ 
\end{array}
\right] = 
\left[
\begin{array}{cc}
z & u \\ 
v & w \\ 
\end{array}
\right]^{-1}
\left[
\begin{array}{cc}
\beta & \gamma \\ 
\delta & \eta \\ 
\end{array}
\right]
\left[
\begin{array}{cc}
z & u \\ 
v & w \\ 
\end{array}
\right].
\]
Thus, by the Jordan normal form theorem, $X^{-1}BX$ can be reduced to 
\[
\newcommand*{\temp}{\multicolumn{1}{|}{}}
\left[
\begin{array}{cccccc}
0 & * & \temp & * & * \\ 
0 & 0 & \temp & 0 & 0 \\ 
\cline{1-5}
0 & * & \temp & 0 & 1 \\
0 & * & \temp & 0 & 0 \\
\end{array}
\right]
\text{ or }
\left[
\begin{array}{cccccc}
0 & * & \temp & * & * \\ 
0 & 0 & \temp & 0 & 0 \\ 
\cline{1-5}
0 & * & \temp & 0 & 0 \\
0 & * & \temp & 0 & 0 \\
\end{array}
\right].
\]

Let 
\[
\newcommand*{\temp}{\multicolumn{1}{|}{}}
B = 
\left[
\begin{array}{cccccc}
0 & \alpha & \temp & \sigma & \tau \\ 
0 & 0 & \temp & 0 & 0 \\ 
\cline{1-5}
0 & \lambda & \temp & 0 & 1 \\
0 & \mu & \temp & 0 & 0 \\
\end{array}
\right] 
\text{ and }
X = \left[
\begin{array}{cccccc}
x & y & \temp & s & t \\ 
0 & x & \temp & 0 & 0 \\ 
\cline{1-5}
0 & p & \temp & z & u \\
0 & q & \temp & 0 & z \\
\end{array}
\right] .
\]
We can assume that $x=1$, otherwise we can replace $X$ with $x^{-1}X$. Then we have
\[
\newcommand*{\temp}{\multicolumn{1}{|}{}}
X^{-1} = 
\left[
\begin{array}{cccccc}
1 & * & \temp & -sz^{-1} & sz^{-2}u - tz^{-1} \\ 
0 & 0 & \temp & 0 & 0 \\ 
\cline{1-5}
0 & -z^{-1}p + z^{-2}uq & \temp & z^{-1} & -z^{-2}u \\
0 & -z^{-1}q & \temp & 0 & z^{-1} \\
\end{array}
\right].
\]
Thus we have
\[
\newcommand*{\temp}{\multicolumn{1}{|}{}}
X^{-1}BX =
\left[
\begin{array}{cccccc}
0 & \alpha' & \temp & \sigma z & \sigma u + \tau z - s \\ 
0 & 0 & \temp & 0 & 0 \\ 
\cline{1-5}
0 & z^{-1}(\lambda - z^{-1}u\mu +q) & \temp & 0 & 1 \\
0 & z^{-1}\mu & \temp & 0 & 0 \\
\end{array}
\right] ,
\]
where $\alpha' = \alpha -sz^{-1}\lambda +(sz^{-2}u - tz^{-1})\mu + \sigma p + (\tau-sz^{-1})q$.

$\mu$ and $\sigma$ cannot be 0 at the same time, otherwise $B$ can be reduced to the following form:
\[
\newcommand*{\temp}{\multicolumn{1}{|}{}}
\left[
\begin{array}{cccccc}
0 & * & \temp & 0 & 0 \\ 
0 & 0 & \temp & 0 & 0 \\ 
\cline{1-5}
0 & 0 & \temp & 0 & 1 \\
0 & 0 & \temp & 0 & 0 \\
\end{array}
\right],
\]
which implies that $(A, B)$ is decomposable. As such, $\alpha'$ can always be reduced to 0 because of the term $-z^{-1}\mu t + \sigma p$ in $\alpha'$.

If $\mu\ne 0$ then $B$ can be reduced to the following matrix:
\[
\newcommand*{\temp}{\multicolumn{1}{|}{}}
\left[
\begin{array}{cccccc}
0 & 0 & \temp & \sigma\mu & 0 \\ 
0 & 0 & \temp & 0 & 0 \\ 
\cline{1-5}
0 & 0 & \temp & 0 & 1 \\
0 & 1 & \temp & 0 & 0 \\
\end{array}
\right].
\]
If $\mu=0$ then $\sigma\ne 0$, thus $B$ can be reduced to the following matrix:
\[
\newcommand*{\temp}{\multicolumn{1}{|}{}}
\left[
\begin{array}{cccccc}
0 & 0 & \temp & 1 & 0 \\ 
0 & 0 & \temp & 0 & 0 \\ 
\cline{1-5}
0 & 0 & \temp & 0 & 1 \\
0 & 0 & \temp & 0 & 0 \\
\end{array}
\right].
\]
In any case, $(A,B)$ is indecomposable. This fact is left as an exercise to the interested readers. This verifies (4.1) and (4.2).

Now, let
\[
\newcommand*{\temp}{\multicolumn{1}{|}{}}
B = 
\left[
\begin{array}{cccccc}
0 & \alpha & \temp & \sigma & \tau \\ 
0 & 0 & \temp & 0 & 0 \\ 
\cline{1-5}
0 & \lambda & \temp & 0 & 0 \\
0 & \mu & \temp & 0 & 0 \\
\end{array}
\right] 
\text{ and }
X = \left[
\begin{array}{cccccc}
x & y & \temp & s & t \\ 
0 & x & \temp & 0 & 0 \\ 
\cline{1-5}
0 & p & \temp & z & u \\
0 & q & \temp & v & w \\
\end{array}
\right] .
\]
We can assume that $x=1$ otherwise we can replace $X$ with $x^{-1}X$. Then we have
\[\def\arraystretch{1.1}
\newcommand*{\temp}{\multicolumn{1}{|}{}}
X^{-1} = 
\left[
\begin{array}{ccc}
	\left[
	\begin{array}{cc}
	1 & * \\ 
	0 & 1 \\ 
	\end{array}
	\right] 
	& \temp &
	-\left[
	\begin{array}{cc}
	s & t \\ 
	0 & 0 \\ 
	\end{array}
	\right] 
	\left[
	\begin{array}{cc}
	z & u \\ 
	v & w \\ 
	\end{array}
	\right] ^{-1}
	\\ 
	\hline
	-\left[
	\begin{array}{cc}
	z & u \\ 
	v & w \\ 
	\end{array}
	\right]^{-1}
	\left[
	\begin{array}{cc}
	0 & p \\ 
	0 & q \\ 
	\end{array}
	\right] 
	& \temp &
	\left[
	\begin{array}{cc}
	z & u \\ 
	v & w \\ 
	\end{array}
	\right] ^{-1}
	\\ 
\end{array}
\right] .
\]
And then we have 
\[
\newcommand*{\temp}{\multicolumn{1}{|}{}}
X^{-1}BX = 
\left[\def\arraystretch{1.1}
\begin{array}{ccc}
	\left[
	\begin{array}{cc}
	0 & \alpha' \\ 
	0 & 0 \\ 
	\end{array}
	\right] 
	& \temp &
	\left[
	\begin{array}{cc}
	\sigma & \tau \\ 
	0 & 0 \\ 
	\end{array}
	\right] 
	\left[
	\begin{array}{cc}
	z & u \\ 
	v & w \\ 
	\end{array}
	\right] 
	\\ 
	\hline
	\left[
	\begin{array}{cc}
	z & u \\ 
	v & w \\ 
	\end{array}
	\right]^{-1}
	\left[
	\begin{array}{cc}
	0 & \lambda \\ 
	0 & \mu \\ 
	\end{array}
	\right] 
	& \temp & 
	\left[
	\begin{array}{cc}
	0 & 0 \\ 
	0 & 0 \\ 
	\end{array}
	\right] 
	\\ 
\end{array}
\right] ,
\]
where
\[
\left[
	\begin{array}{cc}
	0 & \alpha' \\ 
	0 & 0 \\ 
	\end{array}
\right]  =
\left[
	\begin{array}{cc}
	0 & \alpha \\ 
	0 & 0 \\ 
	\end{array}
\right]
 -\left[
	\begin{array}{cc}
	s & t \\ 
	0 & 0 \\ 
	\end{array}
\right]
\left[
	\begin{array}{cc}
	z & u \\ 
	v & w \\ 
	\end{array}
\right]^{-1}
\left[
	\begin{array}{cc}
	0 & \lambda \\ 
	0 & \mu \\ 
	\end{array}
\right] 
+ \left[
	\begin{array}{cc}
	\sigma & \tau \\ 
	0 & 0 \\ 
	\end{array}
\right]
\left[
	\begin{array}{cc}
	0 & p \\ 
	0 & q \\ 
	\end{array}
\right] 
\]

$\lambda, \mu, \sigma$ and $\tau$ cannot be $0$ at the same time, otherwise $(A, B)$ is decompsable. As such, $\alpha'$ can be reduced to $0$ because the free variables
$p$, $q$, $s$ and $t$ appear as linear terms in $\alpha'$.

If $\sigma\ne 0$ or $\tau\ne 0$, then 
$\left[
	\begin{array}{cc}
	\sigma & \tau \\ 
	0 & 0 \\ 
	\end{array}
\right]	
\left[
	\begin{array}{cc}
	z & u \\ 
	v & w \\ 
	\end{array}
	\right] $
can be reduced to 
$\left[
	\begin{array}{cc}
	1 & 0 \\ 
	0 & 0 \\ 
	\end{array}
\right]$. To further reduce 
$
\left[
	\begin{array}{cc}
	z & u \\ 
	v & w \\ 
	\end{array}
\right] ^{-1}
\left[
	\begin{array}{cc}
	0 & \lambda \\ 
	0 & \mu \\ 
	\end{array}
\right]$,
we are limited to those matrices
$\left[
	\begin{array}{cc}
	z & u \\ 
	v & w \\ 
	\end{array}
\right]$
which satisfy that
$\left[
	\begin{array}{cc}
	1 & 0 \\ 
	0 & 0 \\ 
	\end{array}
\right] 
\left[
	\begin{array}{cc}
	z & u \\ 
	v & w \\ 
	\end{array}
\right] = 
\left[
	\begin{array}{cc}
	1 & 0 \\ 
	0 & 0 \\ 
	\end{array}
\right]
$. It follows that $z=1$ and $u=0$. Thus in this case,
\[
\left[
	\begin{array}{cc}
	z & u \\ 
	v & w \\ 
	\end{array}
\right] ^{-1}
 =
\left[
	\begin{array}{cc}
	1 & 0 \\ 
	v & w \\ 
	\end{array}
\right] ^{-1} 
=
\left[
	\begin{array}{cc}
	1 & 0 \\ 
	-w^{-1}v & w^{-1} \\ 
	\end{array}
\right].
\]
And hence we have
\[
\left[
	\begin{array}{cc}
	z & u \\ 
	v & w \\ 
	\end{array}
\right] ^{-1}
\left[
	\begin{array}{cc}
	0 & \lambda \\ 
	0 & \mu \\ 
	\end{array}
\right] =
\left[
	\begin{array}{cc}
	0 & \lambda \\ 
	0 & w^{-1}(\mu - v\lambda) \\ 
	\end{array}
\right]. 
\]
If $\lambda\ne 0$ then 
$\left[
	\begin{array}{cc}
	0 & \lambda \\ 
	0 & w^{-1}(\mu - v\lambda) \\ 
	\end{array}
\right]$ 
can be reduced to 
$\left[
	\begin{array}{cc}
	0 & \lambda \\ 
	0 & 0 \\ 
	\end{array}
\right]$. Thus $(A, B)$ can be reduced to 
\[
\newcommand*{\temp}{\multicolumn{1}{|}{}}
\left(
\left[
\begin{array}{cccccc}
0 & 1 & \temp & 0 & 0 \\ 
0 & 0 & \temp & 0 & 0 \\ 
\cline{1-5}
0 & 0 & \temp & 0 & 0 \\
0 & 0 & \temp & 0 & 0 \\
\end{array}
\right],
\left[
\begin{array}{cccccc}
0 & 0 & \temp & 1 & 0 \\ 
0 & 0 & \temp & 0 & 0 \\ 
\cline{1-5}
0 & \lambda & \temp & 0 & 0 \\
0 & 0 & \temp & 0 & 0 \\
\end{array}
\right] 
\right),
\]
which is decomposable. Thus $\lambda$ must be 0. In this case, $\mu$ cannot be 0, otherwise $(A,B)$ is decomposable. 
So $\left[
	\begin{array}{cc}
	0 & \lambda \\ 
	0 & w^{-1}(\mu - v\lambda) \\ 
	\end{array}
\right]=
\left[
	\begin{array}{cc}
	0 & 0 \\ 
	0 & w^{-1}\mu \\ 
	\end{array}
\right],$
which can be reduced to 
$\left[
	\begin{array}{cc}
	0 & 0 \\ 
	0 & 1 \\ 
	\end{array}
\right]$.
And so $(A,B)$ can be reduced to 
\[
\newcommand*{\temp}{\multicolumn{1}{|}{}}
\left(
\left[
\begin{array}{cccccc}
0 & 1 & \temp & 0 & 0 \\ 
0 & 0 & \temp & 0 & 0 \\ 
\cline{1-5}
0 & 0 & \temp & 0 & 0 \\
0 & 0 & \temp & 0 & 0 \\
\end{array}
\right],
\left[
\begin{array}{cccccc}
0 & 0 & \temp & 1 & 0 \\ 
0 & 0 & \temp & 0 & 0 \\ 
\cline{1-5}
0 & 0 & \temp & 0 & 0 \\
0 & 1 & \temp & 0 & 0 \\
\end{array}
\right] 
\right),
\]
The fact that the above pair is indecomposable is left as an exercise to the interested readers. This verifies (4.3).

If $\sigma = 0$ and $\tau = 0$, then $\lambda$ and $\mu$ cannot be 0 at the same tine, otherwise $(A,B)$ is decomposable. And so 
$\left[
	\begin{array}{cc}
	z & u \\ 
	v & w \\ 
	\end{array}
	\right]^{-1}
	\left[
	\begin{array}{cc}
	0 & \lambda \\ 
	0 & \mu \\ 
	\end{array}
\right]$
can be reduced to 
$\left[
	\begin{array}{cc}
	0 & 1 \\ 
	0 & 0 \\ 
	\end{array}
\right]$. And hence in this case, $(A, B)$ can be reduced to:
\[
\newcommand*{\temp}{\multicolumn{1}{|}{}}
\left(
\left[
\begin{array}{cccccc}
0 & 1 & \temp & 0 & 0 \\ 
0 & 0 & \temp & 0 & 0 \\ 
\cline{1-5}
0 & 0 & \temp & 0 & 0 \\
0 & 0 & \temp & 0 & 0 \\
\end{array}
\right],
\left[
\begin{array}{cccccc}
0 & 0 & \temp & 0 & 0 \\ 
0 & 0 & \temp & 0 & 0 \\ 
\cline{1-5}
0 & 1 & \temp & 0 & 0 \\
0 & 0 & \temp & 0 & 0 \\
\end{array}
\right]
\right) ,
\]
which is decomposable. So this case cannot happen.

\textbf{Case 5.} Let 
\[
A = 
\left[
\begin{array}{ccccc}
0 & 0 & 0 & 0 \\ 
0 & 0 & 0 & 0 \\ 
0 & 0 & 0 & 0 \\
0 & 0 & 0 & 0 \\
\end{array}
\right].
\]
Then we have $\mathrm{Stab}(A) = \mathrm{GL}(4,\mathbb{F})$ and $\mathrm{Nil}C(A) = \mathrm{Mat}(4, \mathbb{F})$. Since the pair $(A,B)$ is indecomposable, the Jordan normal form theorem implies that 
$B$ is smiliar to the following matrix:
\[
\left[
\begin{array}{ccccc}
0 & 1 & 0 & 0 \\ 
0 & 0 & 1 & 0 \\ 
0 & 0 & 0 & 1 \\
0 & 0 & 0 & 0 \\
\end{array}
\right].
\]
This verifies (5.1). 
\end{proof}

The reduction technique used in this paper may be applied to pairs of higher degrees. It is clear from the theorem above that the set of orbits corresponding to indecomposable pairs
admits a cell decomposition, i.e., it is a disjoint union of affine spaces $\mathbb{F}^i$ where $i\ge 0$. We anticipate that this phenomenon still holds for pairs of higher degrees.

\vspace{0.5cm}
\textit{Email address}: \texttt{jiuzhao.hua@gmail.com}

\end{document}